\newtheorem{theorem}{Theorem}
\newtheorem{proposition}{Proposition}
\newtheorem{lemma}{Lemma}
\newtheorem{corollary}{Corollary}
\newtheorem{question}{Question}
\DeclareMathOperator{\spec}{Spec}
\DeclareMathOperator{\cent}{Cent}
\newcommand{\bnum}{\begin{enumerate}}
\newcommand{\enum}{\end{enumerate}}
\begin{document}

\afterpage{\rhead[]{\thepage} \chead[\small R. Sharafdini  and R. Darbandi]{\small  } \lhead[\thepage]{} }                  

\begin{center}
\vspace*{2pt}
{\Large \textbf{Energy of commuting graph of finite groups}}\\[30pt]
{\large \textsf{\emph{Reza Sharafdini $^\ast$, Rajat Kanti Nath,\quad Rezvan Darbandi  }}}
\\[30pt]
\end{center}
\textbf{Abstract.}{\footnotesize
Let $\Gamma$ be a graph with the adjacency matrix $A$.
The energy of  $\Gamma$ is the sum of the absolute values of the eigenvalues of  $A$. In this article we  compute the  energies of the commuting graphs of some finite groups and discuss some consequences.
}

\footnote{\textsf{2000 Mathematics Subject Classification:} 05C12,~05C25,~20D60,~05A15.
}

\footnote{\textsf{Keywords:} Commuting graph, non-commuting graph, AC-group, Eigenvalue, Energy.}

\footnote{$^\ast$ Corresponding author}

\section*{\centerline{1. Introduction}}\setcounter{section}{1}

 The commuting graph of a non-Abelian group $G$ with center $Z(G)$ is an undirected graph with vertex set $G\setminus Z(G)$, and two distinct vertices $a$ and $b$ are adjacent whenever $ab = ba$. We write  $\Gamma_G$ to denote this graph.  Various properties of $\Gamma_G$ have been studied  in \cite{amr06,{Bat-bhR09},iJ07,iJ08,mP13,par13}.
It is noted in \cite{Ne} that the complement of  $\Gamma_G$, known as non-comuting graph of $G$ (see \cite{Ab06}), was first considered by Erd\"{o}s in 1975.



Let $g$ be any element of a group $G$. Then the subgroup  $C_G(g) = \{a \in G \mid  ag = ga\}$ is called centralizer of  $g$. If $C_G(g)$ is Abelian for every $g \in G\setminus Z(G)$ then the group $G$ is called an AC-group. If $\cent(G) = \{C_G(g) \mid  g \in G\}$ and $|\cent(G)| = n$ then  $G$ is called an $n$-centralizer group (see \cite{BelG94,Ash1}).





Let $\Gamma$ be a graph with the adjacency matrix $A$. Let
$\lambda_1, \lambda_2, \ldots, \lambda_t$ be the distinct eigenvalues of
$A$ with corresponding mutiplicites $m_1, m_2, \ldots, m_t$. The spectrum of $\Gamma$ is defined as
\[
\spec(\Gamma)=\Big\{\lambda_1^{m_1}, \lambda_2^{m_2}, \ldots, \lambda_t^{m_t}\Big\}.
\]
The energy of  $\Gamma$ (see \cite{Gu78,LiShiGut2012}) is given by
\begin{equation}\label{Energy-eq}
\mathcal{E}(\Gamma) = \sum_{i = 1}^t m_i|\lambda_i|.
\end{equation}
\noindent It may be mentioned here that
$\mathcal{E}(K_n) = 2n - 2$, where $K_n$ is the complete graph on $n$ vertices.
Spectral aspects of $\Gamma_G$ are considered in \cite{Dut, Dut-integral,KJM-18,IJPAM-8,JLTA-18,IJEST-18} recently for various families of finite groups.
In this paper we  compute   energy of $\Gamma_G$ for several classes of finite AC-groups and discuss some consequences. Throughout the paper same notations are used to denote various families of  groups as in \cite{Dut,Dut-integral}.




\section*{\centerline{2. Main Results}}\setcounter{section}{2}


In this section, we compute the energy of $\Gamma_G$ for  some particular families of finite non-Abelian groups. 
In this section, we compute the energy of $\Gamma_G$ for  some particular families of finite non-Abelian groups.
Using spectrum related results available in \cite{Dut,Dut-integral} and  \eqref{Energy-eq} we get the following results.
\begin{theorem}\label{cor2}
We have
\begin{enumerate}
\item
$
\mathcal{E}(\Gamma_{ M_{2mn}}) =
\begin{cases}
&\!\!\!\!\!\!\!\!4mn-2m-2n-2  ~~\text{ if $m$ is odd}\\[3mm]
&\!\!\!\!\!\!\!\!4mn-4n-m-2 ~~\text{ if $m$ is even}.
\end{cases}
$

\item
$
\mathcal{E}(\Gamma_{D_{2m}}) =
\begin{cases}
2m-4 &\text{ if $m$ is odd}\\[3mm]
3m-6 &\text{ if $m$ is even}.
\end{cases}
$

\item
$
\mathcal{E}(\Gamma_{ Q_{4m}}) = 6m-3.
$

\item $\mathcal{E}(\Gamma_{U_{6n}}) = 10n-8.$
\end{enumerate}
\end{theorem}
\begin{theorem}\label{cor3}
Let $G$ be a finite group. Then we have the following.
\begin{enumerate}
\item If $G = QD_{2^n}$ then
$
\mathcal{E}(\Gamma_{G})
=2^{n}+2^{n - 1}- 6.
$

\item If $G = PSL(2, 2^k)$ then $\mathcal{E}(\Gamma_{G}) = 2^{3k + 1} - 2^{2k + 1} - 2.2^{k + 1} - 4$.

\item If $G = GL(2, q)$ then $\mathcal{E}(\Gamma_{G}) = 2q^4 - 2q^3 - 4q^2 - 2q $.
\item If $\frac{G}{Z(G)} \cong Sz(2)$ then
   $\mathcal{E}(\Gamma_{G})=28|Z(G)|-12.$
\item If $G = A(n, \vartheta)$ then
  $\mathcal{E}(\Gamma_{G})=2{(2^n - 1)^2}.$
\item If $G = A(n, p)$ then
  \[\mathcal{E}(\Gamma_{G})=({p^{3n} -2p^{n}-1})+({p^n + 1})(p^{2n} - p^n - 1)=2p^{3n}-4p^n-2.\]
\item If $G$ is a non-Abelian    and $|G|$ is product of two primes $p, q$  such that $p\mid (q - 1)$ then
$
\mathcal{E}(\Gamma_G)=2q(p-1)-3.
$
\end{enumerate}
\end{theorem}


In the following theorem we compute energy of commuting graphs of the groups $G$ such that $\frac{G}{Z(G)}$ is isomorphic to ${\mathbb{Z}}_p \times {\mathbb{Z}}_p$ and $D_{2m}$.
\begin{theorem}\label{cor:pp}
Let $G$ be a finite group.
\begin{enumerate}
  \item  If $p$ is a prime and $\frac{G}{Z(G)} \cong {\mathbb{Z}}_p \times {\mathbb{Z}}_p$, then $\mathcal{E}(\Gamma_G)=2((p^2 - 1)|Z(G)| -p - 1)$.
  \item  If $m \,(\geq 2)$ is a natural number and $\frac{G}{Z(G)} \cong D_{2m}$, then  $\mathcal{E}(\Gamma_G)=2((2m - 1)|Z(G)| -m - 1).$
\end{enumerate}
\end{theorem}
As applications of Theorem \ref{cor:pp} we also have the following two corollaries.
\begin{corollary}
Let $p$ be a prime.
\begin{enumerate}
\item If $G$ is a non-Abelian group and $|G| = p^3$,  then $\mathcal{E}(\Gamma_G)=2(p^3 -2p - 1).$
\item If $G$ is a  $4$-centralizer finite group, then $\mathcal{E}(\Gamma_G)=6(|Z(G)|-1)$.
\item
If $G$ is a  $(p+2)$-centralizer finite $p$-group,  then
$$\mathcal{E}(\Gamma_G)=2((p^2 - 1)|Z(G)| -p - 1).$$

\item If $G$ is a  $5$-centralizer finite  group, then $$\mathcal{E}(\Gamma_G)\in \Big\{ 8(2|Z(G)|-1),10|Z(G)|-8 \Big\}.$$
\end{enumerate}
\end{corollary}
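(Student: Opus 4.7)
The plan is to reduce each of the four parts to a one-line application of Corollary \ref{cor:pp}, after first identifying the isomorphism type of the central quotient $G/Z(G)$ from a known structural fact. For part (1), I would observe that a non-Abelian group $G$ of order $p^3$ has $|Z(G)|=p$ exactly (the centre is nontrivial by the class equation, but cannot equal $G$), so $G/Z(G)$ has order $p^2$; since it cannot be cyclic (otherwise $G$ would be Abelian), it is forced to be $\mathbb{Z}_p\times\mathbb{Z}_p$. Substituting $|Z(G)|=p$ into Corollary \ref{cor:pp}(1) then gives $\mathcal{E}(\Gamma_G)=2((p^2-1)p-p-1)=2(p^3-2p-1)$.

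For part (2), I would invoke the Belcastro--Sherman classification \cite{BelG94}: a finite $4$-centralizer group satisfies $G/Z(G)\cong\mathbb{Z}_2\times\mathbb{Z}_2$. Plugging $p=2$ into Corollary \ref{cor:pp}(1) gives $\mathcal{E}(\Gamma_G)=2(3|Z(G)|-3)=6(|Z(G)|-1)$. Part (3) is the same move one level up: a standard result (essentially the $p$-group analogue of the above classification) asserts that a $(p+2)$-centralizer $p$-group must have $G/Z(G)\cong\mathbb{Z}_p\times\mathbb{Z}_p$, so Corollary \ref{cor:pp}(1) applies verbatim and delivers the formula $2((p^2-1)|Z(G)|-p-1)$ with no further work.

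Part (4) is the only case with a genuine split. I would use the Belcastro--Sherman classification of $5$-centralizer groups, which states that $G/Z(G)$ is isomorphic to either $\mathbb{Z}_3\times\mathbb{Z}_3$ or $D_6$. In the first subcase, Corollary \ref{cor:pp}(1) with $p=3$ yields
\[
\mathcal{E}(\Gamma_G)=2\bigl((3^2-1)|Z(G)|-3-1\bigr)=2(8|Z(G)|-4)=8(2|Z(G)|-1),
\]
while in the second subcase Corollary \ref{cor:pp}(2) with $m=3$ yields
\[
\mathcal{E}(\Gamma_G)=2\bigl((2\cdot 3-1)|Z(G)|-3-1\bigr)=2(5|Z(G)|-4)=10|Z(G)|-8,
\]
producing exactly the two stated values.

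The ``hard part'' here is not computational in any serious sense; the arithmetic collapses in one line each time. The only substantive content is correctly citing the classifications of $n$-centralizer groups for $n\in\{4,5\}$ and of $(p+2)$-centralizer $p$-groups, together with the elementary observation about the centre of a group of order $p^3$. Once the isomorphism type of $G/Z(G)$ is pinned down, Corollary \ref{cor:pp} does all the remaining work and no case analysis beyond the dichotomy in part (4) is needed.
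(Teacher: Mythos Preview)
Your proposal is correct and matches the paper's proof essentially step for step: in each part the paper identifies $G/Z(G)$ (using the class equation for (i), \cite[Theorem~2]{BelG94} for (ii), \cite[Lemma~2.7]{Ash1} for (iii), and \cite[Theorem~4]{BelG94} for (iv)) and then invokes Corollary~\ref{cor:pp}. The only cosmetic difference is that for the $D_6$ subcase of (iv) the paper cites Theorem~\ref{main2} directly rather than its immediate consequence Corollary~\ref{cor:pp}(ii).
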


\begin{proof}
 (i) In this case $|Z(G)| = p$ and  $\frac{G}{Z(G)} \cong {\mathbb{Z}}_p \times {\mathbb{Z}}_p$. Hence the  result follows from Theorem \ref{cor:pp}(i).\\
 (ii) The  result follows from Theorem \ref{cor:pp}(i), noting that
 $\frac{G}{Z(G)} \cong {\mathbb{Z}}_2 \times {\mathbb{Z}}_2$ (see \cite[Theorem 2]{BelG94}).\\
(iii) 
We have   $\frac{G}{Z(G)} \cong {\mathbb{Z}}_p \times {\mathbb{Z}}_p$ (see \cite[Lemma 2.7]{Ash1}). Hence the  result follows from Theorem \ref{cor:pp}(i).\\
(iv) 
By \cite[Theorem 4]{BelG94} we have  $\frac{G}{Z(G)} \cong {\mathbb{Z}}_3 \times {\mathbb{Z}}_3$ or $D_6$. If $\frac{G}{Z(G)} \cong {\mathbb{Z}}_3 \times {\mathbb{Z}}_3$, then by Theorem \ref{cor:pp}(i) we have $\mathcal{E}(\Gamma_G)=8(2|Z(G)|-1)$.
In the case that $\frac{G}{Z(G)} \cong D_6$, by Theorem \ref{cor:pp}(ii) we have
$\mathcal{E}(\Gamma_G)=10|Z(G)|-8$.
\end{proof}

\begin{corollary}\label{order16}
If a group $G$ is isomorphic to any of the following groups
\begin{enumerate}
\item ${\mathbb{Z}}_2 \times D_8$
\item ${\mathbb{Z}}_2 \times Q_8$
\item $M_{16}  = \langle x , y \mid  x^8 = y^2 = 1, yxy = x^5 \rangle$
\item ${\mathbb{Z}}_4 \rtimes {\mathbb{Z}}_4 = \langle x, y \mid  x^4 = y^4 = 1, yxy^{-1} = x^{-1} \rangle$
\item $D_8 * {\mathbb{Z}}_4 = \langle x, y, c \mid  x^4 = y^2 = c^2 =  1, xy = yx, xc = cx, yc = x^2cy \rangle$
\item $SG(16, 3)  = \langle x, y \mid  x^4 = y^4 = 1, xy = y^{-1}x^{-1}, xy^{-1} = yx^{-1}\rangle$,
\end{enumerate}
then  $\mathcal{E}(\Gamma_G)=18.$
\end{corollary}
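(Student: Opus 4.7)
The plan is to show that each of the six groups $G$ satisfies $|Z(G)|=4$ and $G/Z(G)\cong \mathbb{Z}_2\times\mathbb{Z}_2$, and then invoke Theorem \ref{main2}(i) with $p=2$ together with Corollary \ref{cor:pp}(i). Plugging $p=2$ and $|Z(G)|=4$ into those two statements gives exactly $\spec(\Gamma_G)=\{(-1)^{(2^2-1)\cdot 4-3},((2-1)\cdot 4-1)^{3}\}=\{(-1)^9,3^3\}$ and $\mathcal{E}(\Gamma_G)=2((2^2-1)\cdot 4-3)=18$. Note also that, since each listed group is non-abelian of order $16$, as soon as $|Z(G)|=4$ is established the quotient $G/Z(G)$ has order $4$ and cannot be cyclic (else $G$ would be abelian), so it is automatically isomorphic to $\mathbb{Z}_2\times \mathbb{Z}_2$. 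The entire task therefore reduces to verifying $|Z(G)|=4$ in each of the six cases.

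For $\mathbb{Z}_2\times D_8$ and $\mathbb{Z}_2\times Q_8$ this is immediate from $Z(A\times B)=Z(A)\times Z(B)$ together with $|Z(D_8)|=|Z(Q_8)|=2$. For $M_{16}$, the relation $bab^{-1}=a^5$ forces $Z(M_{16})=\langle a^2\rangle$, of order $4$ (a power $a^i$ is central iff $4i\equiv 0\pmod 8$, and no element $a^ib$ commutes with $a$). For $\mathbb{Z}_4\rtimes\mathbb{Z}_4$, the relation $bab^{-1}=a^{-1}$ gives $Z=\langle a^2,b^2\rangle$, again of order $4$. For the central product $D_8*\mathbb{Z}_4$, the amalgamation identifies the unique order-$2$ subgroup of $\mathbb{Z}_4$ with $Z(D_8)$, so $|Z|=(|Z(D_8)|\cdot|\mathbb{Z}_4|)/2=4$. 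Finally, for $SG(16,3)$ the relations $ab=b^{-1}a^{-1}$ and $ab^{-1}=ba^{-1}$ quickly imply $(ab)^2=(ab^{-1})^2=1$; a short manipulation then shows that both $a^2$ and $b^2$ lie in $Z(G)$ and generate a subgroup of order $4$, and since $|G|=16$ with $|Z(G)|\geq 4$ the index $|G/Z(G)|$ cannot be $1$ or $2$, forcing $|Z(G)|=4$.

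The only real work is these six centre computations, each of which is routine; the modest obstacle is $SG(16,3)$, whose presentation is less familiar than the others and requires the small manipulation just indicated. Alternatively one may short-circuit the whole argument by appealing to the classification of groups of order $16$: the six groups listed are precisely the non-abelian groups of order $16$ with centre of order $4$, the remaining non-abelian ones being $D_{16}$, $Q_{16}$, and $QD_{16}$, all of which have centre of order $2$ and fall under Lemma \ref{semid} and related results rather than under Theorem \ref{main2}(i).
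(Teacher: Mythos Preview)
Your proposal is correct and follows exactly the same route as the paper: observe that each listed group has $|G|=16$ and $|Z(G)|=4$, deduce $G/Z(G)\cong\mathbb{Z}_2\times\mathbb{Z}_2$, and apply Theorem~\ref{main2}(i). The paper simply asserts $|Z(G)|=4$ without justification, whereas you supply the case-by-case verification; otherwise the arguments are identical.
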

\begin{proof}
We have $|G| = 16$ and $|Z(G)| = 4$. Therefore, $\frac{G}{Z(G)} \cong {\mathbb{Z}}_2 \times {\mathbb{Z}}_2$. Hence, the result follows from Theorem \ref{cor:pp}(i).
\end{proof}
Note that all the groups considered in this section so far are non-Abelian AC-groups. We conclude this section by computing the energy of commuting graph of a finite non-Abelian AC-group in general. The following lemma is useful.
\begin{lemma}\label{sum-conjug}
 Let $C_1,\dots, C_n$ be the centralizers of non-central elements of a finite non-Abelian group $G$. Then
\[
\sum_{i=1}^{n}|C_i| = |G| + (n - 1)|Z(G)|.
\]
\end{lemma}
\begin{proof}
By \cite[Lemma 2.1]{Dut-integral} we have  $\Gamma_G= \bigsqcup_{i=1}^{n} K_{|C_i| - |Z(G)|}$. Therefore, the number of vertices in $\Gamma_G$ is given by
\[
\sum_{i=1}^{n}|C_i| - n|Z(G)| = |G| - |Z(G)|.
\]
Hence the lemma follows.
\end{proof}

\begin{theorem}\label{AC-group-energy}
Let $G$ be a finite non-Abelian AC-group with $n$ distinct centralizers of non-central elements.  Then
\[
\mathcal{E}(\Gamma_G) = 2(|G| - |Z(G)| - n).
\]
\end{theorem}
\begin{proof}
Let $ C_ 1,\dots,  C_ n$ be the distinct centralizers of non-central elements of $G$. Then by  \cite[Theorem 2.1]{Dut-integral} we have
\[
\mathcal{E}(\Gamma_G) = 2\sum_{i=1}^{n}|C_i| - 2n(|Z(G)| + 1).
\]
Hence, the result follows using Lemma \ref{sum-conjug}.
\end{proof}
\begin{corollary}
Let $A$ be any finite Abelian group and $G$   a finite non-Abelian  AC-group.  Then    the energy of the commuting graph of   $G\times A$ is given by
\[
\mathcal{E}(\Gamma_{G\times A}) =  2(|G||A| - |Z(G)||A| - n).
\]
\end{corollary}

\section*{\centerline{3. Consequences}}\setcounter{section}{3}

A finite graph $\Gamma$  is called hyperenergetic, borderenergetic and non hyperenergetic if   $\mathcal{E}(\Gamma) > \mathcal{E}(K_{|v(\Gamma)|})$, $\mathcal{E}(\Gamma) = \mathcal{E}(K_{|v(\Gamma)|})$ and $\mathcal{E}(\Gamma) < \mathcal{E}(K_{|v(\Gamma)|})$ respectively, where $v(\Gamma)$ is the set of vertices of $\Gamma$.
The study of hyperenergetic graph was initiated by Walikar et. al \cite{Walikar-99} and Gutman  \cite{Gutman-99} in 1999. However, the concept of borderenergetic  graph was introduced by Gong et al. \cite{Gong-15} in the year 2015. In this section, we shall show that the commuting graphs of the groups considered in Section 2 are neither hyperenergetic nor borderenergetic. We begin with the following result.
\begin{proposition}
If $G = M_{2mn}, D_{2m}, Q_{4m}$ and $U_{6n}$ then $\Gamma_G$ is neither hyperenergetic nor borderenergetic.
\end{proposition}
\begin{proof}
If $G = M_{2mn}$ then $|v(\Gamma_G)|= \begin{cases}
2mn - n & \text{ if $n$ is odd}\\
2mn - 2n & \text{ if $n$ is even}.
\end{cases}$

\noindent Therefore, $\mathcal{E}(K_{|v(\Gamma_G)|}) = \begin{cases}
4mn - 2n - 2& \text{ if $n$ is odd}\\
4mn - 4n - 2 & \text{ if $n$ is even}.
\end{cases}$

\noindent We have
\[
4mn - 2m - 2n - 2 < 4mn - 2n - 2 \text{ and } 4mn -4n - m - 2 < 4mn - 4n - 2.
\]
Hence, by Theorem \ref{cor2}(i), $\Gamma_{M_{2mn}}$ is neither hyperenergetic nor borderenergetic.

If $G = D_{2m}$ then  $|v(\Gamma_G)|= \begin{cases}
2m - 1 & \text{ if $m$ is odd}\\
2mn - 2 & \text{ if $m$ is even}.
\end{cases}$

\noindent Therefore, $\mathcal{E}(K_{|v(\Gamma_G)|}) = \begin{cases}
4m - 4& \text{ if $m$ is odd}\\
4m - 6 & \text{ if $m$ is even}.
\end{cases}$

\noindent We have
\[
2m - 4 < 4m - 4 \text{ and } 3m - 6 < 4m - 6.
\]
Hence, by Theorem \ref{cor2}(ii), $\Gamma_{D_{2m}}$ is neither hyperenergetic nor borderenergetic.

If $G = Q_{4m}$ then $|v(\Gamma_G)|= 4m - 2$.
Therefore, $\mathcal{E}(K_{|v(\Gamma_G)|}) = 8m - 6$.
We have
\[
6m - 3 < 8m -  6.
\]
Hence, by Theorem \ref{cor2}(iii), $\Gamma_{Q_{2m}}$ is neither hyperenergetic nor borderenergetic.

If $G = U_{6n}$ then $|v(\Gamma_G)|= 5n$.
Therefore, $\mathcal{E}(K_{|v(\Gamma_G)|}) = 10n - 2$.
We have
\[
10n - 8 < 10n -  2.
\]
Hence, by Theorem \ref{cor2}(iv), $\Gamma_{U_{6n}}$ is neither hyperenergetic nor borderenergetic.
\end{proof}
\begin{proposition}
If $G = QD_{2^n}, PSL(2, 2^k), GL(2, q), A(n, \vartheta)$ and $A(n, p)$ then $\Gamma_G$ is neither hyperenergetic nor borderenergetic.
\end{proposition}
\begin{proof}
If $G = QD_{2^n}$ then $|v(\Gamma_G)| = 2^n - 2$. Therefore, $\mathcal{E}(K_{|v(\Gamma_G)|}) = 2.2^n - 6$.
We have
\[
2^n + 2^{n - 1} - 6 < 2.2^n - 6.
\]
Hence, by Theorem \ref{cor3}(i), $\Gamma_{QD_{2^n}}$ is neither hyperenergetic nor borderenergetic.

If $G = PSL(2, 2^k)$ then $|v(\Gamma_G)| = 2^k(2^{2k} - 1)$. Therefore, $\mathcal{E}(K_{|v(\Gamma_G)|}) = 2^{3k + 1} - 2^{k + 1} - 2$.
We have
\[
2^{3k + 1} - 2^{2k + 1} - 2.2^{k + 1} - 4 < 2^{3k + 1} - 2^{k + 1} - 2.
\]
Hence, by Theorem \ref{cor3}(ii), $\Gamma_{PSL(2, 2^k)}$ is neither hyperenergetic nor borderenergetic.

If $G = GL(2, q)$ then $|v(\Gamma_G)| = (q^2 - 1)(q^2 - q) - q - 1 = q^4 - q^3 - q^2 - 1$. Therefore, $\mathcal{E}(K_{|v(\Gamma_G)|}) = 2q^4 - 2q^3 - 2q^2 - 4$.
We have
\[
2q^4 - 2q^3 - 4q^2 - 2q < 2q^4 - 2q^3 - 2q^2 - 4.
\]
Hence, by Theorem \ref{cor3}(iii), $\Gamma_{GL(2, q)}$ is neither hyperenergetic nor borderenergetic.

If $G = A(n, \vartheta)$ then $|v(\Gamma_G)| = 2^n(2^n - 1)$. Therefore, $\mathcal{E}(K_{|v(\Gamma_G)|}) = 2.2^{2n} - 2.2^n - 2$.
We have
\[
2.2^{2n} - 4.2^n + 2 < 2.2^{2n} - 2.2^n - 2.
\]
Hence, by Theorem \ref{cor3}(v), $\Gamma_{A(n, \vartheta)}$ is neither hyperenergetic nor borderenergetic.

If $G = A(n, p)$ then $|v(\Gamma_G)| = (p^{2n} - p^n)(p^n + 1)$. Therefore, $\mathcal{E}(K_{|v(\Gamma_G)|})$ $= 2.p^{3n} - 2.p^n - 2$.
We have
\[
2.p^{3n} - 4.p^n - 2 < 2.p^{3n} - 2.p^n - 2.
\]
Hence, by Theorem \ref{cor3}(vi), $\Gamma_{A(n, p)}$ is neither hyperenergetic nor borderenergetic.
\end{proof}

\begin{proposition}
If $G$ is a non-Abelian group and $|G|$ is product of two primes $p, q$  such that $p\mid (q - 1)$ then $\Gamma_G$ is neither hyperenergetic nor borderenergetic.
\end{proposition}
\begin{proof}
We have $|v(\Gamma_G)| = pq - 1$. Therefore, $\mathcal{E}(K_{|v(\Gamma_G)|}) = 2pq - 4$.
Also
\[
2pq - 2q - 3 < 2pq - 4.
\]
Hence, by Theorem \ref{cor3}(vii), the result follows.
\end{proof}

\begin{proposition}\label{quotient-prop}
 $\Gamma_G$ is neither hyperenergetic nor borderenergetic if $\frac{G}{Z(G)} \cong Sz(2), {\mathbb{Z}}_p \times {\mathbb{Z}}_p$ and $D_{2m}$.
\end{proposition}
\begin{proof}
If $\frac{G}{Z(G)} \cong Sz(2)$ then $|v(\Gamma_G)| = 19|Z(G)|$. Therefore, $\mathcal{E}(K_{|v(\Gamma_G)|}) = 38|Z(G)| - 2$.
We have
\[
28|Z(G)| - 12 < 38|Z(G)| - 2.
\]
Hence, by Theorem \ref{cor3}(iv), $\Gamma_G$ is neither hyperenergetic nor borderenergetic.

If $\frac{G}{Z(G)} \cong {\mathbb{Z}}_p \times {\mathbb{Z}}_p$ then $|v(\Gamma_G)| = |Z(G)|p^2 - |Z(G)|$. Therefore, $\mathcal{E}(K_{|v(\Gamma_G)|}) = 2|Z(G)|p^2 - 2|Z(G)| - 2$.
We have
\[
2|Z(G)|p^2 - 2|Z(G)| - p - 1 < 2|Z(G)|p^2 - 2|Z(G)| - 2.
\]
Hence, by Theorem \ref{cor:pp}(i), $\Gamma_G$ is neither hyperenergetic nor borderenergetic.

If $\frac{G}{Z(G)} \cong D_{2m}$ then $|v(\Gamma_G)| = (2m - 1)|Z(G)|$. Therefore, $\mathcal{E}(K_{|v(\Gamma_G)|}) = 2(2m - 1)|Z(G)| - 2$.
We have
\[
2(2m - 1)|Z(G)| - 2m - 2 < 2(2m - 1)|Z(G)| - 2.
\]
Hence, by Theorem \ref{cor:pp}(ii), $\Gamma_G$ is neither hyperenergetic nor borderenergetic.
\end{proof}

\begin{corollary}
Let $G$ be  a finite non-Abelian group and $p$ be any prime. Then $\Gamma_G$ is  neither hyperenergetic nor borderenergetic if
\begin{enumerate}
\item $G$ is of order $p^3$.
\item $G$ is a $4$-centralizer group.
\item $G$ is a $5$-centralizer group.
\item $G$ is a $(p + 2)$-centralizer $p$-group.
\end{enumerate}
\end{corollary}
\begin{proof}
The result follows from Proposition \ref{quotient-prop} since $\frac{G}{Z(G)}$ is isomorphic to either ${\mathbb{Z}}_p \times {\mathbb{Z}}_p$ or $D_6$.
\end{proof}

\begin{corollary}
$\Gamma_G$ is neither hyperenergetic nor borderenergetic if $G$ is given by
 ${\mathbb{Z}}_2 \times D_8$,
${\mathbb{Z}}_2 \times Q_8$,
$M_{16}$,
 ${\mathbb{Z}}_4 \rtimes {\mathbb{Z}}_4$,
 $D_8 * {\mathbb{Z}}_4$ and $SG(16, 3)$.
\end{corollary}
\begin{proof}
The result follows from Proposition \ref{quotient-prop} since $\frac{G}{Z(G)}$ is isomorphic to either ${\mathbb{Z}}_2 \times {\mathbb{Z}}_2$.
\end{proof}

In general, we  have the following theorem.
\begin{theorem}
If $G$ is a finite non-Abelian AC-group  then $\Gamma_G$ is  neither hyperenergetic nor borderenergetic.
\end{theorem}
\begin{proof}
We have $|v(\Gamma_G)| = |G| - |Z(G)|$. Therefore, $\mathcal{E}(K_{|v(\Gamma_G)|}) = 2|G| - 2|Z(G)| - 2$.
We have
\[
2|G| - 2|Z(G)| - 2n < 2|G| - 2|Z(G)| - 2.
\]
Hence, by Theorem \ref{AC-group-energy}, $\Gamma_G$ is neither hyperenergetic nor borderenergetic.
\end{proof}

We conclude this paper with a question given below.
\begin{question}
Is it true that the commuting graph of any finite non-Abelian group is neither hyperenergetic nor borderenergetic?
\end{question}

\small

\noindent
\footnotesize{Reza Sharafdini\\
Department of Mathematics\\
Faculty of Science, Persian Gulf University,\\
Bushehr 75169-13817\\
IRAN \\
e-mail: sharafdini@pgu.ac.ir
}
\newline

\noindent
\footnotesize{Rezvan Darbandi\\
Department of Mathematics and Computer Science\\
Amirkabir University of Technology\\
Tehran, IRAN\\
e-mail: rez1.darnadi@aut.ac.ir}
\newline

\noindent
\footnotesize{Rajat Kanti Nath\\
Department of Mathematical Science\\
Tezpur University, Napaam-784028\\
Sonitpur, Assam\\
INDIA\\
e-mail: rajatkantinath@yahoo.com}

\end{document}